\title{The Mahler conjecture in two dimensions via the probabilistic method}
\date{}
\author{Matthew C. H. Tointon}
\address{Insitut de Math\'ematiques, Universit\'e de Neuch\^atel, Rue Emile-Argand 11, CH-2000 Neuch\^atel, Switzerland}
\email{matthew.tointon@unine.ch}
\thanks{The author is supported by grant FN 200021\_163417/1 of the Swiss National Fund for scientific research. When the majority of this work was carried out he was supported by an EPSRC doctoral training grant awarded by the Department of Pure Mathematics and Mathematical Statistics, Cambridge, and a Leslie Wilson Scholarship from Magdalene College, Cambridge.}
\newtheorem{prop}{Proposition}[section]
\newtheorem{theorem}{Theorem}
\newtheorem{lemma}[prop]{Lemma}
\newtheorem*{conjecture}{Conjecture}
\theoremstyle{definition}
\theoremstyle{remark}
\newtheorem*{remark}{Remark}
\newcommand{\R}{\mathbb{R}}
\newcommand{\N}{\mathbb{N}}
\newcommand{\E}{\mathbb{E}}
\newcommand*{\vol}{\mathop{\textup{vol}}\nolimits}
\newcommand*{\area}{\mathop{\textup{area}}\nolimits}
\numberwithin{equation}{section}
\begin{document}
\maketitle
\thispagestyle{firststyle}
\begin{abstract}The \emph{Mahler volume} is, intuitively speaking, a measure of how ``round'' a centrally symmetric convex body is. In one direction this intuition is given weight by a result of Santal\'{o}, who in the 1940s showed that the Mahler volume is maximized, in a given dimension, by the unit sphere and its linear images, and only these. A counterpart to this result in the opposite direction is proposed by a conjecture, formulated by Kurt Mahler in the 1930s and still open in dimensions $4$ and greater, asserting that the Mahler volume should be minimized by a cuboid. In this article we present a seemingly new proof of the $2$-dimensional case of this conjecture via the probabilistic method. The central idea is to show that either deleting a random pair of edges from a centrally symmetric convex polygon, or deleting a random pair of vertices, reduces the Mahler volume with positive probability.
\end{abstract}

\section{Introduction}
A \emph{convex body} $A\subset\R^d$ is a compact convex set with non-empty interior; it is said to be \emph{centrally symmetric} if $x\in A$ precisely when $-x\in A$. This article concerns a long-standing and seemingly difficult question in convex geometry---the \emph{Mahler conjecture}---that attempts to give a rigorous version of the intuitively reasonable statement that cubes and octahedra are the ``least round'' centrally symmetric convex bodies.

Given a centrally symmetric convex body $A\subset\R^d$, the \emph{polar body} $A^\circ\subset\R^d$ is defined by
\begin{displaymath}
A^\circ=\{x\in\R^d:\langle x,a\rangle\le1\text{ for all }a\in A\}.
\end{displaymath}
The \emph{Mahler volume} $M(A)$ of $A$ is then defined to be
\begin{displaymath}
M(A):=\vol(A)\vol(A^\circ).
\end{displaymath}
Here, of course, $\vol(A)$ means the volume of $A$ (or, more formally, its Lebesgue measure). Note that the Mahler volume is invariant under invertible linear transformations of $\R^d$, since if $T$ is such a transformation then
\begin{equation}\label{eq:lin.tr}
T(A)^\circ=(T^*)^{-1}(A^\circ).
\end{equation}

The Mahler volume can be thought of as measuring how ``round'' $A$ is; the Mahler conjecture seeks to justify this line of thinking by showing that Euclidean balls and their linear images maximize the Mahler volume, whereas cubes and cross-polytopes and their linear images minimize it.

Write $B^d$ for the unit Euclidean ball in $\R^d$, write $Q^d=[-1,1]^d$ for the standard centred cube, and write $O_d=\{x\in\R^d:\|x\|_1\le1\}$ for the standard cross-polytope. Note that $O_d=(Q^d)^\circ$ and $Q^d=(O^d)^\circ$, so that $M(Q^d)=M(O_d)$. Precisely, then, the Mahler conjecture states that for any centrally symmetric convex body $A\subset\R^d$ we have $M(Q^d)\le M(A)\le M(B^d)$.

The second of these inequalities was proved by Santal\'{o} \cite{santalo} in the 1940s, having been previously proved by Blaschke \cite{blaschke} in the cases $d=2,3$.
%
The Mahler conjecture thus reduces to the lower bound, and in fact for the remainder of this article we refer to the lower bound only as the ``Mahler conjecture''.
\begin{conjecture}[Mahler \cite{mahler.con}]
Let $A\subset\R^d$ be a centrally symmetric convex body. Then
\[
M(A)\ge M(Q^d).
\]
\end{conjecture}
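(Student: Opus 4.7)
The plan is to follow the strategy suggested by the title and abstract, pushed as far into general dimension $d$ as possible: use the probabilistic method to induct on the combinatorial complexity of a centrally symmetric polytope via random vertex- and facet-deletion operations, terminating at linear images of $Q^d$.

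First I would reduce to polytopes. Since the maps $A\mapsto\vol(A)$ and $A\mapsto\vol(A^\circ)$ are continuous on the space of centrally symmetric convex bodies equipped with the Hausdorff topology, and centrally symmetric polytopes are dense in this space, it suffices to prove $M(P)\ge M(Q^d)$ for every centrally symmetric polytope $P\subset\R^d$. I would then induct on the number $n$ of antipodal vertex pairs of $P$. The base case is $n=d$: the $2d$ vertices must then be $\pm v_1,\ldots,\pm v_d$ with $v_1,\ldots,v_d$ linearly independent (otherwise $P$ has empty interior), so if $T$ is the linear map sending the standard basis to $v_1,\ldots,v_d$ then $P=T(O_d)$, and by the linear invariance \eqref{eq:lin.tr} we get $M(P)=M(O_d)=M(Q^d)$, as desired.

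For the inductive step, assume $n>d$ and consider two random operations on $P$. \emph{Vertex deletion:} choose an antipodal pair $\{\pm v\}$ of vertices of $P$ uniformly at random, and let $P_v:=\mathrm{conv}(\mathrm{vert}(P)\setminus\{v,-v\})$. \emph{Facet deletion:} choose an antipodal pair $\{\pm F\}$ of facets of $P$ uniformly at random, and let $P^F$ be the polytope obtained by removing from the $H$-description of $P$ the two linear inequalities defining $\pm F$. These operations are polar duals of one another, since vertices of $P$ correspond bijectively to facets of $P^\circ$; in particular each produces a centrally symmetric polytope whose polar has strictly fewer antipodal vertex pairs. The inductive step is completed by the key claim: at least one of the two operations decreases $M(P)$ \emph{in expectation}, whence by Markov some concrete choice of pair yields a centrally symmetric polytope $P'$ with $M(P')<M(P)$ and fewer antipodal vertex pairs in either $P'$ or $(P')^\circ$; a standard two-parameter induction on these two counts then finishes the argument.

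The main obstacle is the key claim. In dimension $d=2$ it is tractable because a convex polygon and its polar have the same number of vertex pairs (edges of one correspond to vertices of the other), so $\vol(P)$ and $\vol(P^\circ)$ can each be written as explicit sums over these pairs; one can compute the expected change in $M$ under each of the two operations, and one of the two expressions has the required sign by a convexity or integration-by-parts argument exploiting the bipolar identity $(P^\circ)^\circ=P$ together with central symmetry. In higher dimensions this collapses: deleting a vertex of $P$ creates a new facet of $P^\circ$ whose shape depends on the global combinatorial structure of the vertex-figure, so $\vol(P^\circ)$ changes in a way that is difficult to compare with the simple change in $\vol(P)$, and there is no longer a natural pairing between vertices and facets. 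It is precisely this step that, if carried out for all $d$, would resolve the Mahler conjecture, and it is the reason the conjecture remains open for $d\ge 4$; any proof along the above lines thus stands or falls on new input into the high-dimensional dichotomy between vertex and facet deletion.
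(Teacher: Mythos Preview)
The statement you are trying to prove is the full Mahler conjecture, which is open for $d\ge4$; the paper does not prove it, and you correctly concede this at the end. So the proposal is not a proof of the stated conjecture, only a programme. The substantive question is whether, restricted to $d=2$, your outline matches the paper's argument. It does at the level of strategy (reduce to polygons, use duality between vertex- and edge-deletion, apply the probabilistic method, induct down to a parallelogram), but your ``key claim'' is not what the paper establishes and would require its own proof.

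You assert that \emph{one of the two random deletions decreases $M(P)$ in expectation}, i.e.\ that $\E_v[M(P_v)]<M(P)$ or $\E_F[M(P^F)]<M(P)$. The paper never shows this. Instead it introduces auxiliary quantities: for each edge $s$ of $A$ the type-$1$ parallelogram $P_1(s)$ (convex hull of $\pm s$) and for each vertex $x$ of $A^\circ$ the type-$2$ parallelogram $P_2(x)$ (convex hull of the neighbours of $\pm x$). Proposition~\ref{lem:quads.in.A} proves the purely geometric inequality $\sum_s\area(P_1(s))\le\sum_x\area(P_2(x))$ by a direct triangle comparison. After swapping $A$ and $A^\circ$ if necessary, this yields
\[
\E_{s\in E(A)}\frac{\area(P_1(s))}{\area(A)}\ \le\ \E_{s\in E(A)}\frac{\area(P_2(v^\circ(s)))}{\area(A^\circ)},
\]
so \emph{some} edge $s$ satisfies the pointwise inequality $\area(P_1(s))/\area(A)\le\area(P_2(v^\circ(s)))/\area(A^\circ)$. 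Only then does Proposition~\ref{prop:bures} convert this ratio inequality, for that particular $s$, into $M(A_s)<M(A)$; the conversion uses an explicit computation of the area changes under edge deletion and loses a strictly negative cross term, so it does not reverse into an expectation statement about $M$ itself. Your ``convexity or integration-by-parts'' placeholder does not capture this mechanism, and without Propositions~\ref{lem:quads.in.A} and~\ref{prop:bures} (or substitutes for them) the $d=2$ step is not proved.

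A further gap in the higher-dimensional programme: your ``two-parameter induction'' on the vertex counts of $P$ and $P^\circ$ need not terminate. Facet deletion on $P$ decreases the vertex count of $P^\circ$ but can \emph{increase} that of $P$ (the paper's Remark gives the example of the $3$-dimensional cross-polytope, where deleting a facet pair produces a cube), and dually for vertex deletion. So neither coordinate, nor their sum, is monotone under both operations, and the induction as stated does not close.
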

The $d=2$ case of this conjecture was proved by Mahler \cite{mahler}. Iriyeh and Shibata \cite{3d} have very recently released a proof the $d=3$ case. The conjecture seems to be open for $d\ge4$ \cite{tao.mahler}, although there are some partial results. For example, the conjecture is known to hold for certain specific classes of convex body (see \cite{lr}, for example), and the weaker inequality $M(A)\ge(\pi/4)^{d-1}M(Q^d)$ has been shown by Kuperberg \cite{kuperberg} to hold in full generality. Moreover, it is known that the cube is a \emph{local} minimizer of the Mahler volume \cite{nprz}, and more generally that so-called \emph{Hanner polytopes} (which have the same Mahler volume as the cube) are local minimizers \cite{kim}. For more information on progress on the Mahler conjecture the reader may consult the expository article of Tao contained in \cite{tao.mahler}, or the dissertation \cite{henze} of Henze, which also gives a detailed account of Mahler's proof of the $d=2$ case and a sketch proof of the upper bound $M(A)\le M(B^d)$.

In proving the $d=2$ case, Mahler actually proved the following theorem (see \cite[Lemma 2.9]{henze}).

\begin{theorem}[Mahler]\label{thm:Mahler.2d.orig}Let $n>2$, and let $A$ be a centrally symmetric convex polygon with $2n$ edges. Then there exists a centrally symmetric polygon $A'$ with $2(n-1)$ edges such that $M(A')<M(A)$.
\end{theorem}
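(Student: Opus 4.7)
The plan is to use the probabilistic method by considering two families of moves, each of which produces a centrally symmetric polygon with $2(n-1)$ edges: for $i=1,\ldots,n$, the \emph{edge-deletion} $E_iA$ that removes the opposite pair of edges indexed by $i$, extending the two adjacent edges until they meet at a new vertex pair; and for $j=1,\ldots,n$, the \emph{vertex-deletion} $V_jA$ that cuts off the opposite pair of vertices indexed by $j$ with a chord through the two neighbours. The goal is to prove
\[
\sum_{i=1}^n\bigl(M(E_iA)-M(A)\bigr)+\sum_{j=1}^n\bigl(M(V_jA)-M(A)\bigr)<0,
\]
from which averaging immediately yields a single move that strictly decreases the Mahler volume; the resulting polygon is the desired $A'$.

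To set up the calculation I would label the vertices of $A$ as $\pm v_1,\ldots,\pm v_n$ and those of $A^\circ$ as $\pm u_1,\ldots,\pm u_n$ in matching cyclic order, so that the edge $e_i=[v_i,v_{i+1}]$ of $A$ is dual to the vertex $u_i$ of $A^\circ$. The move $E_i$ then enlarges $A$ by a symmetric pair of triangles of total area $2T_i$, where $T_i$ is the triangle bounded by $e_i$ and the two extended adjacent edge-lines; simultaneously it shrinks $A^\circ$ by removing the vertex pair $\pm u_i$, a loss of total area $2\widetilde S_i$. Dually, $V_j$ shrinks $A$ by area $2S_j$ (the triangle with vertices $v_{j-1},v_j,v_{j+1}$) and grows $A^\circ$ by area $2\widetilde T_j$. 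A direct expansion of $(\vol(A)+2T_i)(\vol(A^\circ)-2\widetilde S_i)$ and of its vertex analogue gives
\[
M(E_iA)-M(A)=2T_i\vol(A^\circ)-2\widetilde S_i\vol(A)-4T_i\widetilde S_i,
\]
\[
M(V_jA)-M(A)=-2S_j\vol(A^\circ)+2\widetilde T_j\vol(A)-4S_j\widetilde T_j.
\]

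Summing over $i$ and $j$, the total splits into a \emph{linear part}
\[
2\vol(A^\circ)\Bigl(\sum_iT_i-\sum_jS_j\Bigr)+2\vol(A)\Bigl(\sum_j\widetilde T_j-\sum_i\widetilde S_i\Bigr)
\]
and a manifestly negative \emph{quadratic part} $-4\sum_iT_i\widetilde S_i-4\sum_jS_j\widetilde T_j$. Small examples show that the linear part can take either sign, so the heart of the proof is to show that the negative quadratic terms always dominate. The route I would pursue is to rewrite the linear part using the classical identity $\vol(A)=\tfrac12\sum h_iL_i$ (where $h_i=1/|u_i|$ is the distance from $0$ to $e_i$ and $L_i$ is the length of $e_i$) together with its polar analogue, and then to invoke central symmetry and the polar pairing between $T_i$ and $\widetilde S_i$ via a Cauchy--Schwarz--type estimate in order to bound the linear part by the quadratic one.

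The main obstacle is precisely this last inequality. The linear part has no definite sign, and controlling it requires an argument that genuinely uses the hypothesis of central symmetry rather than mere convexity. Once that is established, the total sum is strictly negative, so some single move $E_i$ or $V_j$ reduces $M(A)$, producing the required polygon $A'$ with $2(n-1)$ edges.
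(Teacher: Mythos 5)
Your overall framing---consider the $2n$ possible deletion moves (edge pairs and vertex pairs) and use an averaging/probabilistic argument to show one of them works---is exactly the spirit of the paper, and your expansion of $M(E_iA)-M(A)$ into a linear and a quadratic part is correct. But the proof has a genuine gap at precisely the point you flag: you need $\sum_i(M(E_iA)-M(A))+\sum_j(M(V_jA)-M(A))<0$, you concede that the linear part of this sum can be positive, and you propose to beat it with the quadratic part. That route is almost certainly a dead end on scaling grounds: for a polygon with many sides each triangle area $T_i,S_j,\widetilde S_i,\widetilde T_j$ is small, so the quadratic part $-4\sum_iT_i\widetilde S_i-4\sum_jS_j\widetilde T_j$ is of smaller order than the linear part; if the linear part really can be positive by a fixed proportion, no Cauchy--Schwarz estimate will rescue you. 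The difficulty is that the linear part carries an $i$-dependent weight: writing $P_1(e_i)$ for the parallelogram spanned by $\pm e_i$ and $P_2(u_i)$ for the parallelogram on the neighbours of the dual vertex pair $\pm u_i$ of $A^\circ$, one has $2T_i=w_i\area(P_1(e_i))$ and $2\widetilde S_i=w_i\area(P_2(u_i))$ for a \emph{common} positive factor $w_i=\beta_i/2\gamma_i$ (this is the paper's Lemma \ref{lem:Mahler.2d.basic}, which you have not identified and which requires a small computation with polar coordinates of the adjacent edges). Summing the $M$-increments therefore produces a $w_i$-weighted comparison of the two families of parallelograms, and the weights can conspire against you.

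The paper sidesteps this entirely by \emph{not} averaging the increments $M(E_iA)-M(A)$. Instead it averages the weight-free ratios $\area(P_1(e_i))/\area(A)$ against $\area(P_2(u_i))/\area(A^\circ)$: after interchanging $A$ and $A^\circ$ if necessary, the unweighted inequality $\sum_s\area(P_1(s))\le\sum_x\area(P_2(x))$ (Proposition \ref{lem:quads.in.A}, proved by carving both parallelograms out of a common hexagon and comparing the leftover triangles) yields a single index $i$ with $\area(P_1(e_i))/\area(A)\le\area(P_2(u_i))/\area(A^\circ)$. For that one index the common factor $w_i$ multiplies a non-positive quantity, so its value is irrelevant, and the strictly negative quadratic term finishes the job. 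To repair your argument you would need (a) the common-factor lemma relating $T_i$ to $\widetilde S_i$, (b) the unweighted parallelogram inequality, and (c) the realization that the averaging must be performed on the normalized parallelogram areas rather than on the Mahler increments themselves; as written, the central inequality of your proposal is unproved and the suggested path to it does not appear viable.
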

This is sufficient since by limiting arguments we may always assume that the convex body $A$ in the Mahler conjecture is a polytope, and any centrally symmetric quadrilateral is a linear image of $Q^2$.

The purpose of this article is to present a new and, we hope, entertaining proof of \cref{thm:Mahler.2d.orig}, via a somewhat different method from that used by Mahler. In fact, we give a slight refinement of \cref{thm:Mahler.2d.orig}, which we now describe.

We may view a centrally symmetric convex polytope $A$ in $\R^d$ as the region bounded by a finite set $\mathcal{H}$ of hyperplanes, with central symmetry in particular implying that $H\in\mathcal{H}$ if and only if $-H\in\mathcal{H}$; let us call the minimal such $\mathcal H$ the \emph{hyperplane presentation} of $A$. Note that if the number of pairs of hyperplanes is greater than $d$ then there is at least one way to remove a pair of hyperplanes from the presentation in such a way that the remaining hyperplanes still define a centrally symmetric convex polytope. Alternatively, we may view a centrally symmetric convex polytope $A$ in $\R^d$ as the convex hull of a finite set $\mathcal{P}$ of points, with central symmetry in particular implying that $x\in\mathcal{P}$ if and only if $-x\in\mathcal{P}$; let us call the minimal such $\mathcal P$ the \emph{vertex presentation} of $A$. Note that if the number of pairs of points is greater than $d$ then there is at least one way to remove a pair of points from the presentation in such a way that the remaining points still define a centrally symmetric convex polytope.

Our refinement of \cref{thm:Mahler.2d.orig} is then as follows.
\begin{theorem}\label{thm:Mahler.2d}Let $n>2$, and let $A$ be a centrally symmetric convex polygon with $2n$ edges. Then there is either a way to remove an opposite pair of lines from the hyperplane presentation $A$, or a way to remove an opposite pair of points from the vertex presentation of $A$, in such a way that the Mahler volume decreases.
\end{theorem}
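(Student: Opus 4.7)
Following the hint in the abstract, I would apply the probabilistic method: instead of constructing an explicit good removal, show that the total sum of modified Mahler volumes over all $n$ edge-pair removals and all $n$ vertex-pair removals is strictly less than $2nM(A)$, which forces at least one of them to be strictly less than $M(A)$. Label the vertices of $A$ as $v_1,\ldots,v_{2n}$ in counterclockwise cyclic order with the convention $v_{n+i}=-v_i$, and for each $i$ let $u_i\in\R^2$ be the unique vector with $\langle u_i,v_i\rangle=\langle u_i,v_{i+1}\rangle=1$; then $u_1,\ldots,u_{2n}$ (with $u_{n+i}=-u_i$) is the cyclic vertex list of $A^\circ$. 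Under polar duality, deleting the edge pair $E_i=\{\pm[v_i,v_{i+1}]\}$ from the hyperplane presentation of $A$ is the same operation as deleting the vertex pair $\{\pm u_i\}$ from the vertex presentation of $A^\circ$, and deleting the vertex pair $V_i=\{\pm v_i\}$ from $A$ is the same as deleting the edge pair $\{\pm[u_{i-1},u_i]\}$ from $A^\circ$.

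Writing $\alpha_i,\beta_i$ for the areas lost from and added to $A$ under $V_i$- and $E_i$-removal, and $\alpha_i^*,\beta_i^*$ for the analogous quantities in $A^\circ$, the duality gives
\[M(A-V_i)=(\vol A-\alpha_i)(\vol A^\circ+\beta_i^*),\qquad M(A\setminus E_i)=(\vol A+\beta_i)(\vol A^\circ-\alpha_i^*).\]
Expanding, summing both identities over $i=1,\ldots,n$, and observing that a negative value of $\sum_i\bigl[(M(A-V_i)-M(A))+(M(A\setminus E_i)-M(A))\bigr]$ forces some summand to be negative, the theorem reduces to establishing the inequality
\[\vol A^\circ\sum_{i=1}^n(\beta_i-\alpha_i)+\vol A\sum_{i=1}^n(\beta_i^*-\alpha_i^*)<\sum_{i=1}^n(\alpha_i^*\beta_i+\alpha_i\beta_i^*)\]
for every centrally symmetric $2n$-gon with $n>2$.

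The main obstacle is proving this inequality; sample computations show that both sides are typically positive and of comparable magnitude, so a term-by-term bound is hopeless and a tight cancellation is required. My plan would be to exploit the projective-duality observation that the ``new vertex'' $w_i=\ell_{i-1}\cap\ell_{i+1}$ appearing in $A\setminus E_i$ (where $\ell_j$ is the line supporting $e_j$) is precisely the polar dual of the ``new chord'' $[u_{i-1},u_{i+1}]$ appearing in $A^\circ$ when $\{\pm u_i\}$ is deleted. This should furnish clean cross-product formulas for $\alpha_i^*\beta_i$ and, symmetrically, $\alpha_i\beta_i^*$ in terms of the vectors $v_{i-1},v_i,v_{i+1}$ and their duals; combined with the shoelace identities $\vol A=\tfrac12\sum_j v_j\times v_{j+1}$ and $\vol A^\circ=\tfrac12\sum_j u_j\times u_{j+1}$, one hopes to rearrange the desired inequality into a sum of manifestly non-negative geometric quantities. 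The hypothesis $n>2$ should enter because for a parallelogram ($n=2$) every removal is degenerate and the inequality collapses to an equality, so one expects strict convexity of a genuine $2n$-gon ($n\ge 3$) to provide the positive slack needed to close the argument.
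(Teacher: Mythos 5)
Your overall strategy---average over candidate deletions and exploit the duality between deleting an edge pair from $A$ and a vertex pair from $A^\circ$---is in the right spirit, and your bookkeeping identities for $M(A\setminus E_i)$ and $M(A-V_i)$ are correct. But the argument has a genuine gap: the entire theorem is reduced to the displayed inequality
\[
\vol A^\circ\sum_{i=1}^n(\beta_i-\alpha_i)+\vol A\sum_{i=1}^n(\beta_i^*-\alpha_i^*)<\sum_{i=1}^n(\alpha_i^*\beta_i+\alpha_i\beta_i^*),
\]
which you do not prove and for which you offer only a hope (``one hopes to rearrange \dots into a sum of manifestly non-negative geometric quantities''). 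This is not a routine verification: each $\beta_i$ and $\alpha_i^*$ carries an edge-dependent scale factor (how far the two neighbouring edges must be extended before they meet), so the left-hand side is a \emph{weighted} combination of the parallelogram areas, and its sign cannot be extracted from any unweighted combinatorial identity such as a shoelace formula. As you yourself note, the two sides are of comparable size and a tight cancellation is needed; the statement to be proved has effectively been restated rather than established. (Your remark that the $n=2$ case ``collapses to an equality'' is also off: deleting an edge pair from a parallelogram leaves an unbounded strip, so the quantities are not even defined there.)

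The paper closes exactly this gap with two ingredients your proposal lacks. First, a decoupling lemma: if $\pm s$ is deleted from $A$ and the dual vertices $\pm v^\circ(s)$ from $A^\circ$, then the area gained by $A$ is $\tfrac{\beta}{2\gamma}\area(P_1(s))$ while the area lost by $A^\circ$ is $\tfrac{\beta}{2\gamma}\area(P_2(v^\circ(s)))$, with the \emph{same} prefactor $\beta/2\gamma$ (an elementary similar-triangles computation after normalizing $s$ to be horizontal and the new vertex to lie on the vertical axis). Hence the first-order change in $M$ has the sign of $\area(P_1(s))/\area(A)-\area(P_2(v^\circ(s)))/\area(A^\circ)$, the unknown prefactor drops out of the sign condition, and the second-order term is automatically strictly negative. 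Second, the averaging is then performed on these normalized, weight-free quantities: after swapping $A$ and $A^\circ$ if necessary one may assume $\E_{s\in E(A)}\area(P_1(s))/\area(A)\le\E_{t\in E(A^\circ)}\area(P_1(t))/\area(A^\circ)$, and the inequality $\sum_{t\in E(A^\circ)}\area(P_1(t))\le\sum_{y\in V(A^\circ)}\area(P_2(y))$ is proved term by term by comparing two triangles on a common base inside a common hexagon. Together these produce one edge $s$ for which the deletion strictly decreases $M$. To salvage your approach you would need either to prove your weighted inequality directly, or (better) to insert the normalization step so that the averaging happens before the edge-dependent weights enter.
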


One of the most interesting aspects of our proof of Theorem \ref{thm:Mahler.2d} is that it uses the so-called \emph{probabilistic method}. The probabilistic method is a powerful tool, most often associated with combinatorics, that can be used to prove the existence of a certain mathematical object with certain given properties. The crucial insight of the probabilistic method is that in order to show that it is possible for a given object to have certain properties, it is sufficient to show that if the object is generated at random according to some probability distribution then it has the required properties with positive probability.
The reader may consult the book \cite{alon-spencer} of Alon and Spencer for a more thorough description of the method and of its rich history and numerous applications.

\subsection*{Acknowledgements} I am grateful to two anonymous referees for careful readings of an earlier version of this article and a number of suggestions that have significantly improved the exposition.

\section{Outline of the argument}\label{sec:outline}
Given a centrally symmetric convex polytope $A$ in $\R^d$ we write $V(A)$ for the set of vertices of $A$, and $F(A)$ for the set of \emph{facets} of $A$, which is to say the set of faces of $A$ of dimension $d-1$. In the two-dimensional setting we write $E(A)$ instead of $F(A)$ to emphasize that facets are simply edges.

Each $s\in F(A)$ lies within a hyperplane in $\R^d$ appearing in the hyperplane presentation of $A$. Thus, there exists a unit vector $u(s)\in\R^d$ that is perpendicular to $s$, and some real number $c(s)>0$ such that $s\subset\{x\in\R^d:\langle x,u(s)\rangle=c(s)\}$, and we have
\begin{equation}
\label{eq:A.def}
A=\{x\in\R^d:\langle x,u(s)\rangle\le c(s)\text{ for all }s\in F(A)\}.
\end{equation}
It follows that $A^\circ$ is a centrally symmetric convex polytope in $\R^d$, and the set $V(A^\circ)$ of vertices of $A^\circ$ is precisely $\{v^\circ(s):s\in F(A)\}$, where for each $s\in F(A)$ we define the point $v^\circ(s)\in\R^d$ via
\begin{equation}\label{eq:dual.def}
v^\circ(s)=u(s)/c(s).
\end{equation}
For each vertex $v\in V(A^\circ)$ we denote by $e^\circ(v)$ the facet of $A$ that gave rise to $v$; thus $v=v^\circ(e^\circ(v))$. This notation is illustrated in Figure \ref{fig:def} for the two-dimensional case.

\begin{figure}[b]
\begin{center}
\begin{tikzpicture}
\draw (-0.05,0.05) -- (0.05,-0.05);
\draw (0.05,0.05) -- (-0.05,-0.05) node[below] {\textup{0}};

\draw (-1,2) -- node[above] {$s=e^\circ(x)$} (1,2) -- node[right] {$s'=e^\circ(x')$} (1,-2) -- (-1,-2) -- (-1,2);
\filldraw (-1,2) circle (2pt); 
\filldraw (1,2) circle (2pt); 
\filldraw (1,-2) circle (2pt); 
\filldraw (-1,-2) circle (2pt); 

\draw[dotted] (0,0) -- node[right] {$c(s)$} (0,2);
\draw[dotted] (0,0) -- node[above] {$c(s')$} (1,0);

\draw (0,-3) node {$A$};

\begin{scope}[shift={(1.3,0)}]

\draw (5-0.05,0.05) -- (5.05,-0.05);
\draw (5.05,0.05) -- (5-0.05,-0.05) node[below] {\textup{0}};

\draw (5,1) -- (7,0) -- (5,-1) -- (3,0) -- (5,1);
\filldraw (5,1) circle (2pt) node[above] {$x=v^\circ(s)$};
\filldraw (7,0) circle (2pt) node[right] {$x'=v^\circ(s')$};
\filldraw (5,-1) circle (2pt);
\filldraw (3,0) circle (2pt);

\draw (5,-2) node {$A^\circ$};
\end{scope}

\end{tikzpicture}
\caption{Illustration of the vertex and facet notation in two dimensions.}\label{fig:def}
\end{center}
\end{figure}
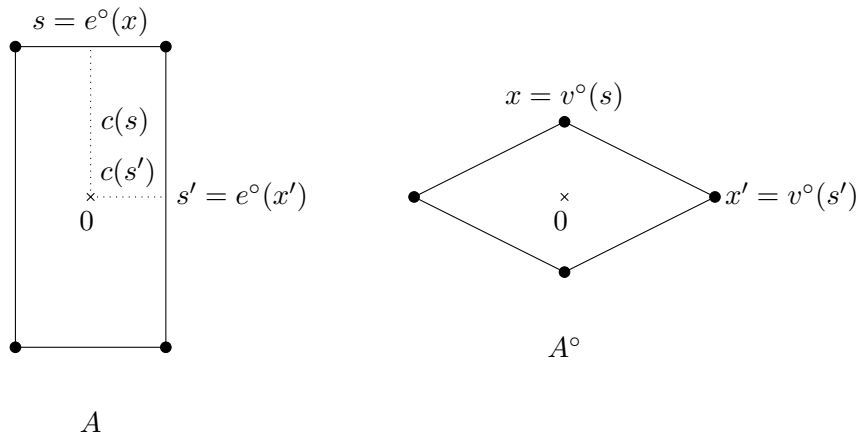

Throughout our proof of \cref{thm:Mahler.2d} we consider two types of parallelogram formed from the vertices of a given centrally symmetric convex polygon $A$, which we call \emph{type-1} parallelograms and \emph{type-2} parallelograms. We define the \emph{type-1 parallelogram based at the edge $e\in E(A)$}, written $P_1(e)$, to be the convex hull of the edges $\pm e$. We define the \emph{type-2 parallelogram based at the vertex $x\in V(A)$}, written $P_2(x)$, to be the convex hull of the four vertices adjacent to the vertices $\pm x$. Note that the definition of a type-2 parallelogram is degenerate when the ambient polygon is itself a parallelogram. These definitions are illustrated in Figure \ref{fig:2types}.  

\begin{figure}
\begin{center}
\begin{tikzpicture}

\draw (0.5,-2) -- node[below]{$e$} (-0.5,-2) -- (-1.16,-1.5) -- (-1.5,-0.5) -- (-1.5,0.5) -- (-1,1.5) -- (-0.5,2) -- node[above]{$-e$} (0.5,2) -- (1.16,1.5) -- (1.5,0.5) -- (1.5,-0.5) -- (1,-1.5) --cycle;

\draw (0,-2.5) node[below] {Type 1};

\filldraw[fill=black!20!white] (0.5,-2) -- (-0.5,-2) -- (-0.5,2) -- (0.5,2) -- cycle;
\draw (0,0) node {$P_1(e)$};

\filldraw (0.5,-2) circle (2pt);
\filldraw (-0.5,-2) circle (2pt);
\filldraw (-1.16,-1.5) circle (2pt);
\filldraw (-1.5,-0.5) circle (2pt);
\filldraw (-1.5,0.5) circle (2pt);
\filldraw (-1,1.5) circle (2pt);
\filldraw (-0.5,2) circle (2pt);
\filldraw (0.5,2) circle (2pt);
\filldraw (1.16,1.5) circle (2pt);
\filldraw (1.5,0.5) circle (2pt);
\filldraw (1.5,-0.5) circle (2pt);
\filldraw (1,-1.5) circle (2pt);

\begin{scope}[shift={(5,0)}]
\draw (0.5,-2) -- (-0.5,-2) node[below]{$x$} -- (-1.16,-1.5) -- (-1.5,-0.5) -- (-1.5,0.5) -- (-1,1.5) -- (-0.5,2) -- (0.5,2) node[above]{$-x$} -- (1.16,1.5) -- (1.5,0.5) -- (1.5,-0.5) -- (1,-1.5) --cycle;

\filldraw[fill=black!20!white] (0.5,-2) -- (-1.16,-1.5) -- (-0.5,2) -- (1.16,1.5) -- cycle;
\draw (0,0) node {$P_2(x)$};

\filldraw (0.5,-2) circle (2pt);
\filldraw (-0.5,-2) circle (2pt);
\filldraw (-1.16,-1.5) circle (2pt);
\filldraw (-1.5,-0.5) circle (2pt);
\filldraw (-1.5,0.5) circle (2pt);
\filldraw (-1,1.5) circle (2pt);
\filldraw (-0.5,2) circle (2pt);
\filldraw (0.5,2) circle (2pt);
\filldraw (1.16,1.5) circle (2pt);
\filldraw (1.5,0.5) circle (2pt);
\filldraw (1.5,-0.5) circle (2pt);
\filldraw (1,-1.5) circle (2pt);

\draw (0,-2.5) node[below] {Type 2};
\end{scope}

\end{tikzpicture}
\caption{The two types of parallelogram.}\label{fig:2types}
\end{center}
\end{figure}
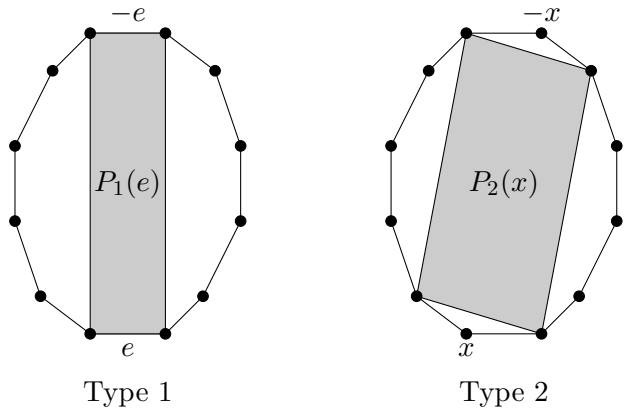

The first stage of our argument is to show that the type-1 parallelograms in a centrally symmetric convex polygon are smaller on average than the type-2 parallelograms, as follows.

\begin{prop}
\label{lem:quads.in.A}
Let $A$ be a centrally symmetric convex polygon in $\R^2$ with at least $6$ sides. Then
\begin{equation}\label{eq:cylcic.ineq}
\sum_{s\in E(A)}\area(P_1(s))\le\sum_{x\in V(A)}\area(P_2(x)),
\end{equation}
with equality if and only $A$ has exactly $6$ sides.
\end{prop}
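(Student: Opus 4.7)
The plan is to convert the claimed inequality into an identity involving two-dimensional cross products of the edge vectors, and then observe that the resulting expression is manifestly non-negative. Label the vertices $x_1,\ldots,x_{2n}$ in counterclockwise order, so that $x_{i+n}=-x_i$, and write $[v,w]:=v_1w_2-v_2w_1$ for the $2\times 2$ determinant. Under this convention, the two parallelogram types have areas $\area(P_1(s_i))=2[x_i,x_{i+1}]$ (for $s_i=[x_i,x_{i+1}]$) and $\area(P_2(x_i))=2[x_{i-1},x_{i+1}]$, both positive. Using the identity $[x_{i-1},x_{i+1}]=[x_{i-1},x_i]+[x_i,x_{i+1}]-2T_i$, where $T_i:=\area(\triangle x_{i-1}x_ix_{i+1})$, together with the shoelace formula $\sum_i[x_i,x_{i+1}]=2\area(A)$, the inequality \eqref{eq:cylcic.ineq} is equivalent to
\[
\sum_{i=1}^{2n} T_i \;\le\; \area(A),
\]
with the same equality condition.

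Second, I would parametrize $A$ by its edge vectors $y_i:=x_{i+1}-x_i$. Central symmetry gives $y_{i+n}=-y_i$, so setting $u_i:=y_i$ for $i=1,\ldots,n$ determines $A$ up to translation. Expanding $x_k=x_1+y_1+\cdots+y_{k-1}$ in the shoelace sum and exploiting $\sum_i y_i=0$ together with $y_{i+n}=-y_i$ yields the zonotope-type identity
\[
\area(A)=\sum_{1\le i<j\le n}[u_i,u_j].
\]
On the other hand, $T_i=\tfrac{1}{2}[y_{i-1},y_i]$ and $T_{i+n}=T_i$ by central symmetry, so $\sum_{i=1}^{2n}T_i=\sum_{i=1}^n[y_{i-1},y_i]$; and since $y_0=y_{2n}=-u_n$, this equals $[u_1,u_n]+\sum_{i=1}^{n-1}[u_i,u_{i+1}]$.

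The rest is a bookkeeping step: subtracting gives
\[
\area(A)-\sum_{i=1}^{2n} T_i \;=\; \sum_{\substack{1\le i<j\le n\\ j\ge i+2,\;(i,j)\ne(1,n)}}[u_i,u_j],
\]
and the index set on the right is empty precisely when $n=3$ and non-empty otherwise, which will deliver both the inequality and its equality case. The only genuinely delicate point is verifying that each bracket $[u_i,u_j]$ remaining on the right-hand side is strictly positive; I would check this by noting that the exterior angles $\theta_k$ of $A$ are positive and, by central symmetry, satisfy $\theta_2+\cdots+\theta_{n+1}=\pi$, so the cumulative counterclockwise rotation carrying $u_i$ to $u_j$ (for $1\le i<j\le n$) is $\theta_{i+1}+\cdots+\theta_j\in(0,\pi)$, giving $[u_i,u_j]=|u_i||u_j|\sin(\theta_{i+1}+\cdots+\theta_j)>0$.
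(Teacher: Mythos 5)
Your proof is correct, and it takes a genuinely different route from the paper. The paper's argument is purely synthetic: for each vertex $x$ it forms the hexagon $H(x)=[x-1,x,x+1,x+n-1,x+n,x+n+1]$, observes that $P_1(e(x))$ and $P_2(x)$ are both obtained from $H(x)$ by carving off a pair of opposite triangles $\pm T_1(x)=\pm[x,x+1,x+n-1]$ and $\pm T_2(x)=\pm[x-1,x,x+1]$ respectively, and then proves the resulting inequality \emph{term by term}: after re-indexing, $T_1(x-1)$ and $T_2(x)$ share the base $[x-1,x]$, and a convexity-plus-symmetry argument (the heights of $x, x+1,\dots,x+n-1$ above that base are strictly increasing) shows $T_1(x-1)$ is strictly taller once $n\ge 4$. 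Your argument instead runs everything through the determinant calculus: the same triangles $T_i=\area(\triangle x_{i-1}x_ix_{i+1})$ appear (your $T_i$ is exactly the paper's $T_2(x_i)$), but rather than comparing them with the $T_1$'s you reduce \eqref{eq:cylcic.ineq} to $\sum_i T_i\le\area(A)$ and then exploit the fact that a centrally symmetric polygon is a zonogon, so that $\area(A)=\sum_{i<j}[u_i,u_j]$ while $\sum_i T_i$ picks out precisely the ``cyclically consecutive'' brackets. The payoff of your route is an exact formula for the deficit, namely $\area(A)-\sum_i T_i=\sum [u_i,u_j]$ over the non-consecutive pairs, which makes both the inequality and the equality case ($n=3$ is exactly when that index set is empty) completely transparent, and each remaining bracket is positive because the cumulative turning angle between $u_i$ and $u_j$ lies in $(0,\pi)$ --- all of which you justify correctly. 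What you lose relative to the paper is the stronger pointwise statement \eqref{eq:Mahler.2d.exp.term} and the picture-level intuition; what you gain is a quantitative identity and a computation that would be easier to automate or generalize. The only steps you leave as ``I would verify'' (the zonogon area identity and the positivity of the brackets) are standard and your indicated verifications are sound, so I see no gap.
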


The second stage of our argument shows how to pass from a certain comparison of type-1 parallelograms in $A$ with type-2 parallelograms in $A^\circ$ to the conclusion required by \cref{thm:Mahler.2d}, as follows.

\begin{prop}\label{prop:bures}
Let $A$ be a centrally symmetric convex polygon in $\R^2$ with at least $6$ sides, and let $s\in E(A)$. Let $A_s=\{x\in\R^d:\langle x,u(s')\rangle\le c(s')\text{ for }s'\in E(A)\backslash\{\pm s\}\}$ be the set obtained from $A$ by removing $\pm s$ from $E(A)$ in the presentation (\ref{eq:A.def}). Suppose that
\begin{equation}
\label{eq:Mahler.2d.j}
\frac{\area(P_1(s))}{\area(A)}\le\frac{\area(P_2(v^\circ(s)))}{\area(A^\circ)}.
\end{equation}
Then $M(A_s)<M(A)$.
\end{prop}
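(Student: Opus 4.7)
The plan is to compute both volume changes $\area(A_s)-\area(A)$ and $\area(A^\circ)-\area(A_s^\circ)$ explicitly and show that each is the \emph{same} fraction of $\area(P_1(s))$ and $\area(P_2(v^\circ(s)))$ respectively. Hypothesis (\ref{eq:Mahler.2d.j}) will then translate directly into a comparison of relative area changes of $A$ and $A^\circ$, from which the strict decrease of Mahler volume will follow by a short algebraic manipulation.

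Write $s=[a,b]$, let $s_-,s_+$ denote the two edges of $A$ adjacent to $s$, and let $p$ denote the intersection of the lines carrying $s_-$ and $s_+$. Removing $\pm s$ from the hyperplane presentation enlarges $A$ by the two congruent triangles on $\{a,b,p\}$ and $\{-a,-b,-p\}$, each with base $|s|$ and height $h$ equal to the distance from $p$ to the line carrying $s$. Since $P_1(s)$ is a parallelogram of base $|s|$ and height $2c(s)$, this gives
\[
\area(A_s)-\area(A) \;=\; |s|\,h \;=\; \frac{h}{2c(s)}\,\area(P_1(s)).
\]

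Dually, removing $\pm s$ from the H-presentation of $A$ corresponds to removing $\pm v$ (where $v=v^\circ(s)$) from the vertex presentation of $A^\circ$, so $A^\circ\setminus A_s^\circ$ is the union of two triangles on $\{v,n_1,n_2\}$ and $\{-v,-n_1,-n_2\}$, where $n_1=v^\circ(s_-)$ and $n_2=v^\circ(s_+)$ are the neighbors of $v$ in $V(A^\circ)$. The crucial duality observation is that, because $p$ lies on both of the lines carrying $s_-$ and $s_+$, we have $\langle p,n_1\rangle=\langle p,n_2\rangle=1$; hence the line through $n_1$ and $n_2$ is $\{y:\langle p,y\rangle=1\}$, at distance $1/|p|$ from the origin, while $v$ lies at signed distance $h/(c(s)|p|)$ from this line (using the identity $h=\langle p,u(s)\rangle-c(s)$). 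Combined with the formula $\area(P_2(v^\circ(s)))=2|n_1-n_2|/|p|$ (which comes from viewing $P_2(v^\circ(s))$ as a parallelogram with diagonals $2n_1,2n_2$), this yields the matching identity
\[
\area(A^\circ)-\area(A_s^\circ) \;=\; \frac{h}{2c(s)}\,\area(P_2(v^\circ(s))).
\]

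Setting $\delta_1:=\area(A_s)-\area(A)>0$ and $\delta_2:=\area(A^\circ)-\area(A_s^\circ)>0$, hypothesis (\ref{eq:Mahler.2d.j}) together with the two boxed identities gives $\delta_1\,\area(A^\circ)\le\delta_2\,\area(A)$, so
\[
M(A_s)-M(A) \;=\; \delta_1\,\area(A^\circ)-\delta_2\,\area(A)-\delta_1\delta_2 \;\le\; -\delta_1\delta_2 \;<\; 0,
\]
as required. The main obstacle will be the duality computation $\langle p,n_i\rangle=1$ identifying the polar line through $n_1,n_2$; once that is in hand, matching the two ratios $h/(2c(s))$ and deducing the Mahler inequality are routine.
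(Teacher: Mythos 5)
Your proof is correct and follows essentially the same route as the paper: both arguments express the gain $\area(A_s)-\area(A)$ and the loss $\area(A^\circ)-\area((A_s)^\circ)$ as the \emph{same} multiple of $\area(P_1(s))$ and $\area(P_2(v^\circ(s)))$ respectively (your $h/2c(s)$ is the paper's $\beta/2\gamma$), and then conclude by the identical algebraic expansion of $M(A_s)-M(A)$. The only real difference is in how the matching ratio is established: the paper normalizes by a rotation and shear and compares similar triangles (its \cref{lem:Mahler.2d.basic}), whereas you derive it coordinate-freely from the duality identity $\langle p,n_i\rangle=1$, which is arguably cleaner.
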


We prove these results in the next section using elementary geometry. For now, let us see how they combine to imply \cref{thm:Mahler.2d}.
\begin{proof}[Proof of \cref{thm:Mahler.2d}]
It is not difficult to check that if $A$ is a centrally symmetric convex body in $\R^d$ then $(A^\circ)^\circ=A$ (see \cite[Proposition 1.1]{tao.santalo}, for example). In the case $d=2$, note also that $A$ and $A^\circ$ are each centrally symmetric polygons with the same number of edges, and removing a pair of edges from the hyperplane presentation of one corresponds to removing a pair of vertices from the vertex presentation of the other. In proving \cref{thm:Mahler.2d}, therefore, we may interchange $A$ and $A^\circ$ without loss of generality. In particular, we may assume that
\begin{displaymath}
\E_{s\in E(A)}\frac{\area(P_1(s))}{\area(A)}\le\E_{t\in E(A^\circ)}\frac{\area(P_1(t))}{\area(A^\circ)}.
\end{displaymath}
Combining this with \cref{lem:quads.in.A} implies that
\begin{displaymath}
\E_{s\in E(A)}\frac{\area(P_1(s))}{\area(A)}\le\E_{s\in E(A)}\frac{\area(P_2(v^\circ(s)))}{\area(A^\circ)},
\end{displaymath}
and so---and this is where we apply the probabilistic method---there must exist some $s$ such that the inequality \eqref{eq:Mahler.2d.j} holds. \cref{prop:bures} implies that for this $s$ we have $M(A_s)<M(A)$, and so the theorem is proved.
\end{proof}

\begin{remark}
In dimension greater than $2$ we would no longer be able to interchange $A$ and $A^\circ$ in quite the same way, since they would not in general have the same number of facets or vertices. In three dimensions, for example, if $A$ were the cube and $A^\circ$ the cross-polytope then interchanging $A$ and $A^\circ$ would increase the number of facets of $A$. Indeed, deleting a pair of facets from the three-dimensional cross-polytope produces a linear image of the cube, and so the process of deleting pairs of facets would not even necessarily terminate if we allowed interchanges.
\end{remark}

\section{The details of the argument}\label{sec:proof}

In this section we prove Propositions \ref{lem:quads.in.A} and \ref{prop:bures}. Throughout, given points $x_1,\ldots,x_r\in\R^d$ we write $[x_1,\ldots,x_r]$ for their convex hull. Moreover, given a centrally symmetric convex polytope $A$, for each $x\in V(A)$ we write $x+1$ for the vertex neighbouring $x$ in a clockwise direction, and $x-1$ for the vertex neighbouring $x$ in an anticlockwise direction. More generally, for $k\in\N$ we define $x+k$ and $x-k$ recursively via $x+k=(x+(k-1))+1$ and $x-k=(x-(k-1))-1$. Note that if $A$ has $2n$ vertices then $x+n=-x$ for every $x\in V(A)$.
\begin{proof}[Proof of \cref{lem:quads.in.A}]
Equality is trivial when $A$ has $6$ sides, so we may assume it has $2n$ sides with $n\ge4$ and prove that the inequality \eqref{eq:cylcic.ineq} holds and is strict.

Given a vertex $x$ of $A$ write $H(x)$ for the hexagon $[x-1,x,x+1,x+n-1,x+n,x+n+1]$. Write $e(x)\in E(A)$ for the edge $[x-1,x]$, and note that each of $P_1(e(x))$ and $P_2(x)$ is a subset of $H(x)$. Carving $P_1(e(x))$ out from $H(x)$ leaves triangles $T_1(x)=[x,x+1,x+n-1]$ and $-T_1(x)$, as illustrated in Figure \ref{fig:carve}. Carving out $P_2(x)$ from $H(x)$ leaves the triangles $T_2(x)=[x-1,x,x+1]$ and $-T_2(x)$, as also illustrated in Figure \ref{fig:carve}.
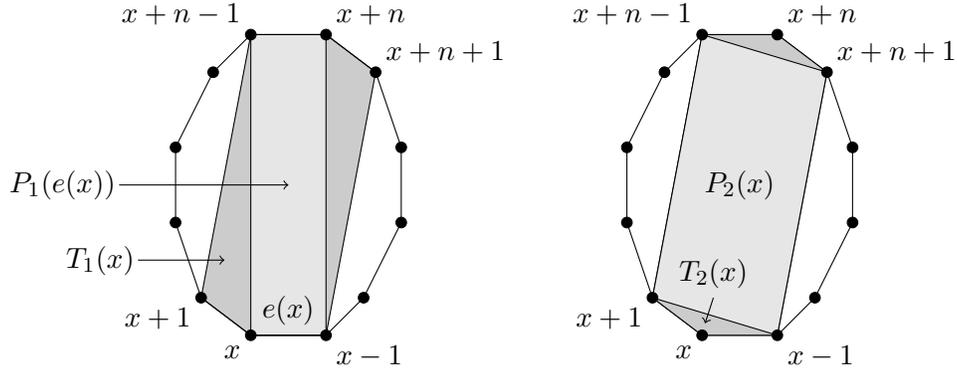
\begin{figure}[h]
\begin{center}
\begin{tikzpicture}
%
%

\begin{scope}[shift={(4,0)}]
\filldraw[fill=black!20!white] (0.5,-2) -- (-0.5,-2) -- (-1.16,-1.5) -- (-0.5,2) -- (0.5,2) -- (1.16,1.5) -- cycle;
\filldraw[fill=black!10!white] (0.5,-2) -- (-0.5,-2) -- (-0.5,2) -- (0.5,2) -- cycle;

\draw (0.5,-2) node[below right] {$x-1$} --  node[above] {$e(x)$} (-0.5,-2) node[below left] {$x$} -- (-1.16,-1.5) node[below left] {$x+1$} -- (-1.5,-0.5) -- (-1.5,0.5) -- (-1,1.5) -- (-0.5,2) node[above left] {$x+n-1$} -- (0.5,2) node[above right] {$x+n$} -- (1.16,1.5) node[above right] {$x+n+1$} -- (1.5,0.5) -- (1.5,-0.5) -- (1,-1.5) -- cycle;

\filldraw (0.5,-2) circle (2pt);
\filldraw (-0.5,-2) circle (2pt);
\filldraw (-1.16,-1.5) circle (2pt);
\filldraw (-1.5,-0.5) circle (2pt);
\filldraw (-1.5,0.5) circle (2pt);
\filldraw (-1,1.5) circle (2pt);
\filldraw (-0.5,2) circle (2pt);
\filldraw (0.5,2) circle (2pt);
\filldraw (1.16,1.5) circle (2pt);
\filldraw (1.5,0.5) circle (2pt);
\filldraw (1.5,-0.5) circle (2pt);
\filldraw (1,-1.5) circle (2pt);

\draw[->] (-2,-1) -- (-0.85,-1);
\draw (-2.5,-1) node {$T_1(x)$};
\draw[->] (-2.25,0) -- (0,0);
\draw (-3,0) node{$P_1(e(x))$};
\end{scope}

\begin{scope}[shift={(10,0)}]
\draw (0.5,-2) node[below right] {$x-1$} --  (-0.5,-2) node[below left] {$x$} -- (-1.16,-1.5) node[below left] {$x+1$} -- (-1.5,-0.5) -- (-1.5,0.5) -- (-1,1.5) -- (-0.5,2) node[above left] {$x+n-1$} -- (0.5,2) node[above right] {$x+n$} -- (1.16,1.5) node[above right] {$x+n+1$} -- (1.5,0.5) -- (1.5,-0.5) -- (1,-1.5) -- cycle;

\filldraw[fill=black!20!white] (0.5,-2) -- (-0.5,-2) -- (-1.16,-1.5) -- (-0.5,2) -- (0.5,2) -- (1.16,1.5) -- cycle;
\filldraw[fill=black!10!white] (0.5,-2) -- (-1.16,-1.5) -- (-0.5,2) -- (1.16,1.5) -- cycle;

\filldraw (0.5,-2) circle (2pt);
\filldraw (-0.5,-2) circle (2pt);
\filldraw (-1.16,-1.5) circle (2pt);
\filldraw (-1.5,-0.5) circle (2pt);
\filldraw (-1.5,0.5) circle (2pt);
\filldraw (-1,1.5) circle (2pt);
\filldraw (-0.5,2) circle (2pt);
\filldraw (0.5,2) circle (2pt);
\filldraw (1.16,1.5) circle (2pt);
\filldraw (1.5,0.5) circle (2pt);
\filldraw (1.5,-0.5) circle (2pt);
\filldraw (1,-1.5) circle (2pt);

\draw[->] (-0.35,-1.5) node[above] {$T_2(x)$} -- (-0.46,-1.84);
\draw node {$P_2(x)$};
\end{scope}
\end{tikzpicture}
\caption{The two parallelograms from the proof of \cref{lem:quads.in.A} carved out of the same hexagon.}\label{fig:carve}
\end{center}
\end{figure}
The desired conclusion is therefore equivalent to the statement that
\[
\sum_{x\in V(A)}\area(T_1(x))>\sum_{x\in V(A)}\area(T_2(x)).
\]
However, since we are summing over all vertices $x$ we may replace $x$ by $x-1$ on the left-hand side, meaning that this is equivalent to the statement that
\[
\sum_{x\in V(A)}\area(T_1(x-1))>\sum_{x\in V(A)}\area(T_2(x)).
\]
In fact, we claim that this inequality holds not just in the sum, but term by term, in the sense that
\begin{equation}
\label{eq:Mahler.2d.exp.term}
\area(T_1(x-1))>\area(T_2(x))
\end{equation}
for every $x\in V(A)$. To see that this is true, note that the triangles $T_1(x-1)$ and $T_2(x)$ may both be thought of as having the edge $[x-1,x]$ as a base. Without loss of generality, we may assume that this edge is horizontal and that the body $A$ lies above it, as illustrated in Figure \ref{fig:triangles}.

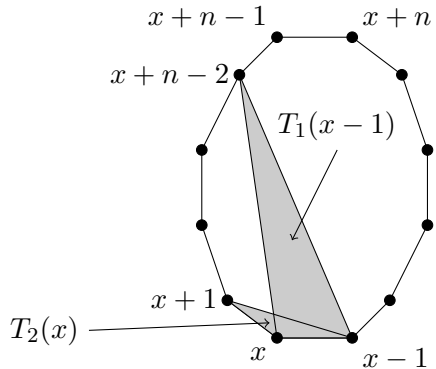
\begin{figure}[b]
\begin{center}
\begin{tikzpicture}

\draw (0.5,-2) node[below right] {$x-1$} -- (-0.5,-2) node[below left] {$x$} -- (-1.16,-1.5) node[left] {$x+1$} -- (-1.5,-0.5) -- (-1.5,0.5) -- (-1,1.5) node[left] {$x+n-2$} -- (-0.5,2) node[above left] {$x+n-1$} -- (0.5,2) node[above right] {$x+n$} -- (1.16,1.5) -- (1.5,0.5) -- (1.5,-0.5) -- (1,-1.5) --cycle;

\filldraw[fill=black!20!white]  (0.5,-2) -- (-0.5,-2) -- (-1,1.5) -- cycle;
\filldraw[fill=black!20!white] (0.5,-2) -- (-0.5,-2) -- (-1.16,-1.5) -- cycle;
\draw (-219/402,-678/402) -- (-0.5,-2);

\filldraw (0.5,-2) circle (2pt);
\filldraw (-0.5,-2) circle (2pt);
\filldraw (-1.16,-1.5) circle (2pt);
\filldraw (-1.5,-0.5) circle (2pt);
\filldraw (-1.5,0.5) circle (2pt);
\filldraw (-1,1.5) circle (2pt);
\filldraw (-0.5,2) circle (2pt);
\filldraw (0.5,2) circle (2pt);
\filldraw (1.16,1.5) circle (2pt);
\filldraw (1.5,0.5) circle (2pt);
\filldraw (1.5,-0.5) circle (2pt);
\filldraw (1,-1.5) circle (2pt);

\draw[->] (0.3,0.5) node[above] {$T_1(x-1)$} -- (-0.3,-0.7);
\draw[->] (-3,-1.9) node[left] {$T_2(x)$} -- (-0.6,-1.8);

\end{tikzpicture}
\caption{Illustration of the triangles appearing in the inequality \eqref{eq:Mahler.2d.exp.term}.}\label{fig:triangles}
\end{center}
\end{figure}

Now by symmetry the edge $[x+n-1,x+n]$ is horizontal, and so by convexity none of the edges lying on the clockwise path from $[x-1,x]$ to $[x+n-1,x+n]$ can be horizontal or downward sloping from $x+j$ to $x+j+1$. The vertical components of $x,x+1,\ldots,x+n-1$ are therefore strictly increasing. Since $n\ge4$, we may therefore conclude that $x+n-2$ has a vertical component strictly greater than that of $x+1$. The triangle $T_1(x-1)$ therefore has a greater height than $T_2(x)$ and the same base, as illustrated in Figure \ref{fig:triangles}, and so (\ref{eq:Mahler.2d.exp.term}) holds and the lemma is proved.
\end{proof}

The proof of \cref{prop:bures} is also relatively straightforward, but we make it easier to follow with two simple lemmas.

\begin{lemma}\label{lem:Mahler.2d.basic}Let $A$ be a centrally symmetric convex polygon in $\R^2$. Suppose that $s\in E(A)$ is horizontal, let $s'\in E(A)$ be an edge adjacent to $s$, and write $z$ for the intersection of the vertical axis and the line in $\R^2$ containing $s'$, as illustrated in Figure \ref{fig:Mahler.2d.basic}. Write $\alpha$ for the vertical distance from $z$ to $s$, write $\beta$ for the vertical distance from $v^\circ(s)$ to $v^\circ(s')$, and write $\gamma$ for the vertical distance from $v^\circ(s')$ to the origin. Then
\begin{displaymath}
\frac{\alpha}{c(s)}=\frac{\beta}{\gamma}.
\end{displaymath}
\end{lemma}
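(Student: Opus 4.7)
The plan is to choose coordinates in which $s$ lies along a horizontal line at the top of $A$, and then read both sides of the claimed identity off directly from the definition $v^\circ(t)=u(t)/c(t)$. Applying the invariance noted at the start (and replacing $s$ by $-s$ if necessary) we may assume $u(s)=(0,1)$, so that the line of $s$ is $\{y=c(s)\}$ and $v^\circ(s)=(0,1/c(s))$ sits on the $y$-axis.

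Writing $u(s')=(\cos\theta,\sin\theta)$ for some angle $\theta$, the definition immediately gives
\[
v^\circ(s')=\Bigl(\tfrac{\cos\theta}{c(s')},\ \tfrac{\sin\theta}{c(s')}\Bigr),
\]
while intersecting the line $\{x\cos\theta+y\sin\theta=c(s')\}$ containing $s'$ with the $y$-axis produces $z=(0,c(s')/\sin\theta)$. The next step is a sign check: because $s$ is the uppermost horizontal edge and $s'$ is adjacent to it, convexity forces the line of $s'$ to meet the $y$-axis \emph{above} $s$. This yields $\sin\theta>0$ and $c(s')/\sin\theta\ge c(s)$ simultaneously, and taking reciprocals then gives $1/c(s)\ge\sin\theta/c(s')$. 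Reading the three vertical distances off the coordinates I would then record
\[
\alpha=\frac{c(s')}{\sin\theta}-c(s),\qquad \beta=\frac{1}{c(s)}-\frac{\sin\theta}{c(s')},\qquad \gamma=\frac{\sin\theta}{c(s')}.
\]

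With these formulas in hand the identity collapses to a single algebraic manipulation: multiplying numerator and denominator of $\alpha/c(s)$ through by $1/(c(s)c(s'))$ converts the expression into $\beta/\gamma$. Concretely,
\[
\frac{\alpha}{c(s)}=\frac{c(s')/\sin\theta-c(s)}{c(s)}=\frac{1/c(s)-\sin\theta/c(s')}{\sin\theta/c(s')}=\frac{\beta}{\gamma},
\]
as required.

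The only step requiring any real thought is the orientation/sign check ensuring that $z$ lies above $s$ and that $v^\circ(s)$ is higher than $v^\circ(s')$; both facts are forced by the convexity of $A$ (equivalently of $A^\circ$) once $s$ has been placed at the top. After that, the lemma is a one-line consequence of the explicit formula $v^\circ(t)=u(t)/c(t)$.
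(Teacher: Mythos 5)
Your proof is correct and is essentially the paper's argument recast in explicit coordinates: your formulas amount to the identities $\alpha+c(s)=c(s')/\sin\theta=1/\gamma$ and $\beta+\gamma=1/c(s)$, which are exactly what the paper extracts from similar triangles together with the duality formula $v^\circ(t)=u(t)/c(t)$. (One cosmetic slip: the multiplier converting $\alpha/c(s)$ into $\beta/\gamma$ is $\sin\theta/(c(s)c(s'))$ rather than $1/(c(s)c(s'))$, but your displayed computation is right.)
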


\begin{figure}[t]
\begin{center}
\begin{tikzpicture}
\draw[dotted] (0.75,3.5) -- (0,5) -- node[left] {$\alpha$} (0,3.5) -- node[left] {$c(s)$} (0,0) -- node[below right] {$c(s')$} (2,1);
\draw (-0.25,3.5) -- (-0.1,3.5) -- node[above] {$s$} (0.75,3.5) -- node[anchor=south west] {$s'$} (2.25,0.5);
\draw (1.9,1.2) -- (1.7,1.1) -- (1.8,0.9);
\draw (-0.05,5.05) -- (0.05,5-0.05);
\draw (-0.05,5-0.05) -- (0.05,5.05) node[above] {$z$};
\draw (-0.05,0.05) -- (0.05,-0.05);
\draw (0.05,0.05) -- (-0.05,-0.05) node[below] {\textup{0}};
\filldraw (0.75,3.5) circle (2pt);
\draw (0,0.5) arc (90:30:0.5);
\draw (70:0.5) node[above right] {$\theta$};
\draw (5-0.05,1.05) -- (5.05,1-0.05);
\draw (5.05,1.05) -- (5-0.05,1-0.05) node[below] {\textup{0}};
\filldraw (5,3.142857) circle (2pt) node[above right] {$v^\circ(s)$};
\filldraw (8,2.5) circle (2pt) node[above right] {$v^\circ(s')$};
\draw (5,3.142857) -- (8,2.5);
\draw[dotted] (5,3.142857) -- node[left] {$\beta$} (5,2.5) -- node[left] {$\gamma$} (5,1) -- (8,2.5) -- (4.75,2.5);
\draw (5,1.5) arc (90:30:0.5);
\draw (5,1)+(70:0.5) node[above right] {$\theta$};
\end{tikzpicture}
\caption{Illustration of the hypotheses of \cref{lem:Mahler.2d.basic}.}\label{fig:Mahler.2d.basic}
\end{center}
\end{figure}
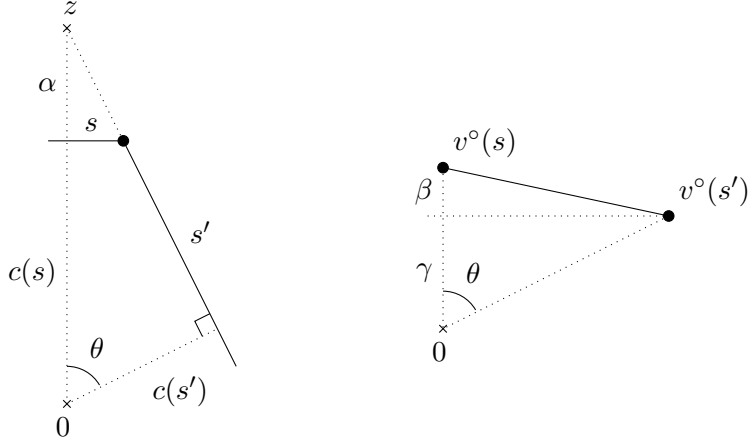

\begin{proof}
We have $\|v^\circ(s')\|=1/c(s')$ by (\ref{eq:dual.def}), and by the equality of the angles labelled $\theta$ in Figure \ref{fig:Mahler.2d.basic} we may compare similar triangles to conclude that $\|v^\circ(s')\|/\gamma=(\alpha+c(s))/c(s')$. Combining these gives $\alpha+c(s)=1/\gamma$. Another instance of (\ref{eq:dual.def}) implies that $\beta+\gamma=1/c(s)$. These last two equalities give $\alpha\gamma=1-c(s)\gamma=\beta c(s)$, which gives the desired result.
\end{proof}

An immediate consequence of \cref{lem:Mahler.2d.basic} is the following fact.

\begin{lemma}\label{cor:Mahler.2d.basic}
Let $A$ be a centrally symmetric convex polygon in $\R^2$, and suppose that $s\in E(A)$ is horizontal and that the lines containing the edges adjacent to the edge $s$ intersect on the vertical axis. Then the vertices adjacent to $v^\circ(s)$ lie on a common horizontal line.
\end{lemma}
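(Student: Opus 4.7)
The plan is to apply \cref{lem:Mahler.2d.basic} twice, once for each of the two edges adjacent to $s$, and observe that the hypothesis forces the two applications to produce the same value $\alpha$, hence (after a bit of algebra within the statement of the lemma) the same vertical coordinate for the two adjacent vertices of $v^\circ(s)$.

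More concretely, write $s'$ and $s''$ for the two edges of $A$ adjacent to $s$, and denote by $z'$ and $z''$ the points where the lines containing $s'$ and $s''$ meet the vertical axis. The hypothesis is that $z'=z''$, so in particular their vertical distances to $s$ coincide; call this common distance $\alpha$. Applying \cref{lem:Mahler.2d.basic} to $s'$ and to $s''$ gives
\[
\frac{\alpha}{c(s)}=\frac{\beta'}{\gamma'}=\frac{\beta''}{\gamma''},
\]
where $\gamma',\gamma''$ are the vertical distances from $v^\circ(s'),v^\circ(s'')$ to the origin, and $\beta',\beta''$ are the vertical distances from these points to $v^\circ(s)$. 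Since $v^\circ(s)=u(s)/c(s)$ lies on the vertical axis at height $1/c(s)$, I have $\beta'+\gamma'=\beta''+\gamma''=1/c(s)$, and solving the linear relation $\alpha\gamma=\beta c(s)=(1/c(s)-\gamma)c(s)$ in each case yields
\[
\gamma'=\gamma''=\frac{1}{\alpha+c(s)}.
\]
Since $v^\circ(s')$ and $v^\circ(s'')$ are precisely the vertices of $A^\circ$ adjacent to $v^\circ(s)$, and they now share a common vertical coordinate, the conclusion follows.

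The only point of delicacy is making sure the signs (i.e.\ the side of the origin on which each point lies) are handled consistently so that the equality $\beta+\gamma=1/c(s)$ truly holds in both instances; this is immediate from the figure accompanying \cref{lem:Mahler.2d.basic}, since $s'$ and $s''$ sit on the same side of the horizontal edge $s$ and both $v^\circ(s'),v^\circ(s'')$ lie in the same closed half-plane determined by the horizontal line through the origin as $v^\circ(s)$. No step is really an obstacle, so I expect the proof to be one short paragraph consisting of invoking \cref{lem:Mahler.2d.basic} twice and combining the resulting equalities.
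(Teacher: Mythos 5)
Your proof is correct and takes the approach the paper intends: the paper states this lemma as an ``immediate consequence'' of \cref{lem:Mahler.2d.basic} and gives no further argument, and applying that lemma to each of the two edges adjacent to $s$, then extracting $\gamma = 1/(\alpha + c(s))$ with the same $\alpha$ and $c(s)$ in both cases, is exactly the computation being left implicit.
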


\begin{proof}[Proof of \cref{prop:bures}]

It follows from \eqref{eq:lin.tr} that applying an invertible linear transormation to $A$ not affect the value of either side of the inequality (\ref{eq:Mahler.2d.j}). We are therefore free to apply such a transformation to $A$. In particular, by rotating we may assume that $s$ is horizontal, and by applying a suitable shear we may assume that the intersection $z$ of the lines containing the edges adjacent to $s$ lies on the vertical axis, as illustrated in Figure \ref{fig:bures}. Note that the edge $[v^\circ(s)-1,v^\circ(s)+1]$ of $P_2(v^\circ(s))$ is also horizontal by \cref{cor:Mahler.2d.basic}. Having applied these transformations, we write $\beta$ for the vertical distance from $v^\circ(s)$ to $P_2(v^\circ(s))$, and $\gamma$ for the distance from $v^\circ(s)$ to the origin minus $\beta$, as in \cref{lem:Mahler.2d.basic}. This notation is also recorded in Figure \ref{fig:bures}.

\begin{figure}[t]
\begin{center}
\begin{tikzpicture}
\draw (-3,1.5) -- (-2,2) -- node[above] {$s$} (1,2) -- (2.5,0.5) -- (3,-1.5);
\draw (3,-1.5) -- (2,-2) -- node[below] {$-s$} (-1,-2) -- (-2.5,-0.5) -- (-3,1.5);

\draw (0,-4) node {$A$};
\draw (7,-4) node {$A^\circ$};

\filldraw (-3,1.5) circle (2pt);
\filldraw (-2,2) circle (2pt);
\filldraw (1,2) circle (2pt);
\filldraw (2.5,0.5) circle (2pt);
\filldraw (3,-1.5) circle (2pt);
\filldraw (2,-2) circle (2pt);
\filldraw (-1,-2) circle (2pt);
\filldraw (-2.5,-0.5) circle (2pt);

\draw[dotted] (-2,2) -- (0.5,3.25);
\draw[dotted] (1,2) -- (-0.25,3.25);

\draw[dotted] (2,-2) -- (-0.5,-3.25);
\draw[dotted] (-1,-2) -- (0.25,-3.25);

\draw (-2,2) -- (-1,-2);
\draw (2,-2) -- (1,2);

\draw[dotted] (0,3.5) -- (0,-3.5);

\draw (-0.75,0.75) node {$P_1(s)$};
\draw (-0.05,3.05) node[left] {$z$} -- (0.05,3-0.05);
\draw (-0.05,3-0.05) -- (0.05,3.05);
\draw (0.05,-3.05) -- (-0.05,-3+0.05) node[left] {$-z$};
\draw (0.05,-3+0.05) -- (-0.05,-3.05);
\draw (-0.05,0.05) -- (0.05,-0.05);
\draw (-0.05,-0.05) -- (0.05,0.05) node[right] {$0$} ;
\draw (6,2) node[above left] {$v^\circ(s)-1$} -- (7,3) node[above left] {$v^\circ(s)$} -- (9,2) node[above right] {$v^\circ(s)+1$} -- (9.15,0.75) -- (8,-2) -- (7,-3) node[below right] {$-v^\circ(s)$} -- (5,-2) -- (4.85,-0.75) -- cycle;
\draw (6,2) -- (9,2) -- (8,-2) -- (5,-2) -- cycle;

\draw[dotted] (7,3.5) -- (7,3);
\draw[dotted] (7,0) -- (7,-3.5);
\draw (7.2,2.4) node {$\beta$};
\draw (7.2,1) node {$\gamma$};
\draw[<->] (7,2.925) -- (7,2);
\draw[<->] (7,0) -- (7,2);

\filldraw (6,2) circle (2pt);
\filldraw (7,3) circle (2pt);
\filldraw (9,2) circle (2pt);
\filldraw (8,-2) circle (2pt);
\filldraw (7,-3) circle (2pt);
\filldraw (5,-2) circle (2pt);
\filldraw (4.85,-0.75) circle (2pt);
\filldraw (9.15,0.75) circle (2pt);

\draw (7-0.05,0.05) -- (7.05,-0.05);
\draw (7-0.05,-0.05) -- (7.05,0.05) node[right] {$0$} ;

\draw (6.25,-0.75) node {$P_2(v^\circ(s))$};
\end{tikzpicture}
\caption{Illustration of the proof of \cref{prop:bures}.}\label{fig:bures}
\end{center}
\end{figure}

The body $A_s$ is obtained from the body $A$ by removing the edges $\pm s$ and extending their neighbouring edges to the points $\pm z$, which will form a pair of vertices of $A_s$. The body $(A_s)^\circ$ is obtained from the body $A^\circ$ by removing the vertices $\pm v^\circ(s)$ and taking the convex hull of the remaining vertices. Again, the reader may find it helpful to refer to Figure \ref{fig:bures}.

This all implies that
\[
\area((A_s)^\circ)=\area(A^\circ)-\frac{\beta}{2\gamma}\area(P_2(v^\circ(s))),
\]
and combined with \cref{lem:Mahler.2d.basic} implies that
\[
\area(A_s)=\area(A)+\frac{\beta}{2\gamma}\area(P_1(s)).
\]
Multiplying these equations together, we see that
\[
\begin{split}
M(A_s)=M(A)+\frac{\beta}{2\gamma}\Big(\area(A^\circ)\area(P_1(s))-\area(A)\area(P_2(v^\circ(s)))\Big)
        \\-\frac{\beta^2}{4\gamma^2}\area(P_1(s))\area(P_2(v^\circ(s))).
\end{split}
\]
Since the last term of this expression is always negative, the proposition follows from (\ref{eq:Mahler.2d.j}).
\end{proof}


\begin{thebibliography}{10}
\bibitem{alon-spencer}
N. Alon and J. H. Spencer. \textit{The probabilistic method} (second edition), Wiley--Interscience, New York (2000).
\bibitem{blaschke}
W. Blaschke. \textit{Vorlesungen \"uber Differentialgeometrie II}, Springer, Berlin (1923).
\bibitem{henze}
M. Henze. \textit{The Mahler conjecture}, diploma thesis, Otto-von-Guericke-Universit\"at Magdeburg (2008).
\bibitem{3d}
H. Iriyeh and M. Shibata. Symmetric Mahler's conjecture for the volume product in the three dimensional case, preprint, arXiv:1706.01749.
\bibitem{kim}
J. Kim. Minimal volume product near Hanner polytopes, \textit{J. Funct. Anal.} \textbf{226}(4) (2014), 2360--2402. arXiv:1212.2544.
\bibitem{kuperberg}
G. Kuperberg. From the Mahler conjecture to Gauss linking integrals, \textit{Geom. Funct. Anal.} \textbf{18}(3) (2008), 870--892. arXiv:math/0610904.
\bibitem{lr}
M. A. Lopez and S. Reisner. A special case of Mahler's conjecture, \textit{Discrete Comput. Geom.} \textbf{20}(2) (1998), 163--177.
\bibitem{mahler}
K. Mahler. Ein Minimalproblem f\"ur konvexe Polygone, \textit{Mathematica (Zutphen)} \textbf{B} (1939), 118--127.
\bibitem{mahler.con}
K. Mahler. Ein \"Ubertragungsprinzip f\"ur konvexe K\"orper, \textit{Casopis Pest. Mat. Fys.} \textbf{68} (1939), 93--102.
\bibitem{nprz}
F. Nazarov, F. Petrov, D. Ryabogin and A. Zvavitch. A remark on the Mahler conjecture: local minimality of the unit cube, \textit{Duke Math. J.} \textbf{154}(3) (2010), 419--430. arXiv:0905.0867.
\bibitem{santalo}
L. A. Santal\'{o}. An affine invariant for convex bodies of $n$-dimensional space (in Spanish), \textit{Portugaliae Math.} \textbf{8}, (1949), 155--161.
\bibitem{tao.santalo}
T. C. Tao. \textit{Santal\'o's inequality}, expository note available from the author's website at\\
\url{http://www.math.ucla.edu/~tao/preprints/Expository/santalo.dvi}.
\bibitem{tao.mahler}
T. C. Tao. \textit{Structure and Randomness: pages from year one of a mathematical blog}, American Mathematical Society, Providence, RI (2008).
\end{thebibliography}
\end{document}